\theoremstyle{plain}
\newtheorem{thm}{Theorem}[section]
\newtheorem{prp}{Proposition}[section]
\newtheorem{lem}{Lemma}[section]
\theoremstyle{definition}
\theoremstyle{remark}
\newtheorem{rmk}{Remark}[section]
\numberwithin{equation}{section}
\newcommand{\R}{\mathbb{R}}
\newcommand{\C}{\mathbb{C}}
\newcommand{\pa}{\partial}
\newcommand{\eps}{\varepsilon}
\newcommand{\jb}[1]{\langle #1 \rangle}
\newcommand{\JB}[1]{\Bigl\langle #1 \Bigr\rangle}
\DeclareMathOperator{\realpart}{\rm Re}
\DeclareMathOperator{\imagpart}{\rm Im}
\newcommand{\dis}{\displaystyle}
\begin{document}
\title{
 A remark on decay rates of solutions for a system of quadratic 
nonlinear Schr\"odinger equations in 2D
 }  

\author{
          Soichiro Katayama\thanks{
              Department of Mathematics, Wakayama University. 
              930 Sakaedani, Wakayama 640-8510, Japan. 
              (E-mail: {\tt katayama@center.wakayama-u.ac.jp})}
         \and Chunhua Li\thanks{
              Department of Mathematics, Yanbian University. 
              Yanji 133002, China. 
              (E-mail: {\tt sxlch@ybu.edu.cn})}
         \and  Hideaki Sunagawa\thanks{
              Department of Mathematics, Graduate School of Science, 
              Osaka University. 
              1-1 Machikaneyama-cho, Toyonaka, Osaka 560-0043, Japan. 
              (E-mail: {\tt sunagawa@math.sci.osaka-u.ac.jp})}
} 
 
\date{\today }   
\maketitle

\noindent{\bf Abstract:}\ 
We consider the initial value problem for a three-component system of 
quadratic nonlinear Schr\"odinger equations with mass resonance 
in two space dimensions. 
Under a suitable condition on the coefficients of the nonlinearity, 
we will show that the solution decays strictly faster than ${O(t^{-1})}$ 
as $t \to +\infty$ in $L^{\infty}$ by providing with an enhanced 
decay estimate {of order $O((t\log t)^{-1})$}. 
Differently from the previous works, our approach does not rely on the 
explicit form of the asymptotic profile of the solution at all. \\

\noindent{\bf Key Words:}\ 
System of NLS; Nonlinear dissipation; $L^{\infty}$-decay.
\\

\noindent{\bf 2000 Mathematics Subject Classification:}\ 
35Q55, 35B40
\\

\noindent{\bf Running Title:}\ 
Decay for a system of NLS
\\

\section{Introduction and the main result}
This paper is intended to be a sequel of the papers \cite{L1} and 
\cite{L2} by one of the authors, which are concerned with decay property of 
solutions to the initial value problem for a class of nonlinear Schr\"odinger 
systems. 
The model system which we focus on here is 
\begin{align}
 \left\{\begin{array}{l}
 \dis{i\pa_t u_1 + \frac{1}{2m_1}\Delta u_1 
      = \lambda_1 |u_1|u_1 +\mu_1 \overline{u_2} u_3, }\\
 \dis{i\pa_t u_2 + \frac{1}{2m_2}\Delta u_2 
      = \lambda_2 |u_2|u_2 +\mu_2 \overline{u_1} u_3, }\\
 \dis{i\pa_t u_3 + \frac{1}{2m_3}\Delta u_3 
      = \lambda_3 |u_3|u_3 +\mu_3  u_1 u_2, }
 \end{array}\right.
 \quad t>0,\ \ x \in \R^2
 \label{eq}
\end{align}
with 
\begin{align}
 u_j(0,x)=\varphi_j(x), \qquad x \in \R^2, \ \ j=1,2,3
 \label{data}
\end{align}
(see, e.g., \cite{CC}, \cite{CCO}, \cite{HLN1} for the physical background of 
this system), where $m_1, m_2, m_3 \in \R \setminus \{0\}$, 
$\lambda_1, \lambda_2, \lambda_3$, 
$\mu_1, \mu_2, \mu_3 \in \C\setminus \{0\}$ are constants, 
$\varphi=(\varphi_j(x))_{j=1,2,3}$ is a prescribed $\C^3$-valued function, and 
$u=(u_j(t,x))_{j=1,2,3}$ is a $\C^3$-valued unknown function.
As usual, $i=\sqrt{-1}$, $\pa_t=\pa/\pa t$, and $\Delta$ is the Laplacian in $x$-variables.

By a minor modification of the method of \cite{L1} and \cite{L2},  
we can show the following basic $L^{\infty}$-decay result. 
Here and hereafter, we denote by 
$H^{s,\sigma}(\R^2)$ the weighted Sobolev spaces, i.e.,  
$$
 H^{s,\sigma}(\R^2)
 =
 \{ f \in L^2(\R^2)\, |\, 
    (1+|x|^2)^{\sigma/2}(1-\Delta)^{s/2}f \in L^2(\R^2)
 \}
$$
equipped with the norm 
$\|f\|_{H^{s,\sigma}} 
=\left\|(1+|x|^2)^{\sigma/2}(1-\Delta)^{s/2}f\right\|_{L^{2}}$.

\begin{prp}\label{prp_basic}
Assume 
\begin{align}
  m_1+m_2=m_3,
  \label{assump_m}
\end{align} 
\begin{align}
  \imagpart \lambda_j\le 0 \ \ \mbox{for $j=1,2,3$},
  \label{assump_lambda}
\end{align} 
\begin{align}
  \kappa_1\mu_1+\kappa_2\mu_2=\kappa_3 \overline{\mu_3} 
  \ \ \mbox{with some $\kappa_1$, $\kappa_2$, $\kappa_3>0$},
  \label{assump_mu}
\end{align} 
and 
$\varphi=(\varphi_j)_{j=1, 2, 3}\in H^{s,0}(\R^2)\cap H^{0,s}(\R^2)$ with $1<s < 2$.  
Then there exists a positive constant $\eps_0>0$ such that the initial 
value problem \eqref{eq}--\eqref{data} admits a unique global solution  
$$ 
 u  \in C\bigl([0,\infty); H^{s,0}(\R^2)\cap H^{0,s}(\R^2)\bigr), 
$$ 
provided that $\|\varphi\|_{H^{s,0}}+\|\varphi\|_{H^{0,s}}\le \eps_0$. 
Moreover, there exists a positive constant $C_0$ such that 
\begin{align}
 \|u(t,\cdot)\|_{L^{\infty}} \le \frac{C_0}{1+t}
  \label{decay1}
\end{align} 
for all $t\ge 0$.
\end{prp}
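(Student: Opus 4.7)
The plan is to follow the factorization approach of Hayashi--Naumkin, as adapted to dissipative NLS systems in \cite{L1}, \cite{L2}. For each $j$, let $L_j=i\pa_t+\frac{1}{2m_j}\Delta$ and $J_j(t)=x+\frac{it}{m_j}\nabla$, which commutes with $L_j$. The standard factorization $e^{it\Delta/(2m_j)}=M_jD_j\mathcal{F}M_j$, with $M_j(t,x)=e^{im_j|x|^2/(2t)}$ and $(D_j\phi)(x)=(m_j/(it))\phi(m_jx/t)$, yields
\begin{equation*}
 u_j(t,x)
 = \frac{m_j}{it}\,e^{im_j|x|^2/(2t)}\,\alpha_j\!\left(t,\tfrac{m_j x}{t}\right)+R_j(t,x),
\end{equation*}
where $\alpha_j(t,\xi)$ is the (rescaled) profile and $R_j$ is a remainder of size $o(t^{-1})$ controlled by $L^2$-norms of $u_j$ and $J_ju_j$. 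Substituting this ansatz into \eqref{eq} and using the mass-resonance condition \eqref{assump_m} to align the oscillatory phases in $\overline{u_2}u_3$, $\overline{u_1}u_3$, $u_1u_2$ with those of $u_1$, $u_2$, $u_3$ on their respective rays, I obtain (after absorbing $m_j$-dependent positive factors into a further rescaling of $\alpha_j$) the reduced ODE system
\begin{equation*}
 i\pa_t\alpha_j(t,\xi)
 = \frac{1}{t}\bigl(\lambda_j|\alpha_j|\alpha_j+\mu_jP_j(\alpha)\bigr)+(\text{remainder}),
\end{equation*}
with $P_1=\overline{\alpha_2}\alpha_3$, $P_2=\overline{\alpha_1}\alpha_3$, $P_3=\alpha_1\alpha_2$.

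Set $E(t,\xi):=\sum_{j=1}^3\kappa_j|\alpha_j(t,\xi)|^2$ with the $\kappa_j$ from \eqref{assump_mu}. A direct computation from the ODE yields
\begin{equation*}
 \pa_tE
 = \frac{2}{t}\sum_{j=1}^3\kappa_j(\imagpart\lambda_j)\,|\alpha_j|^3
 +\frac{2}{t}\imagpart\!\Bigl[(\kappa_1\mu_1+\kappa_2\mu_2)\overline{\alpha_1\alpha_2}\alpha_3+\kappa_3\mu_3\,\alpha_1\alpha_2\overline{\alpha_3}\Bigr]+(\text{remainder}).
\end{equation*}
Inserting $\kappa_1\mu_1+\kappa_2\mu_2=\kappa_3\overline{\mu_3}$ from \eqref{assump_mu} turns the bracket into $\kappa_3\bigl[\,\overline{\mu_3\alpha_1\alpha_2\overline{\alpha_3}}+\mu_3\alpha_1\alpha_2\overline{\alpha_3}\,\bigr]=2\kappa_3\realpart(\mu_3\alpha_1\alpha_2\overline{\alpha_3})\in\R$, so its imaginary part vanishes identically. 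Combined with \eqref{assump_lambda}, the principal part of $\pa_tE$ is nonpositive, so $E(t,\xi)$ stays uniformly bounded in $t\ge 1$ and $\xi\in\R^2$. Via the factorization, this transfers to the pointwise bound $\|u(t,\cdot)\|_{L^\infty}\le C/t$.

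To close the argument rigorously, I would run a bootstrap on three quantities: the $L^2$-norms of $u_j$ (conserved up to dissipation), weighted $L^2$-norms measuring the action of $J_j$, and the $L^\infty$-norm of $\alpha_j$. Under $\|\varphi\|_{H^{s,0}}+\|\varphi\|_{H^{0,s}}\le\eps_0$, the $t^{-1}$-decay makes the nonlinear contributions time-integrable in the relevant energy identities, so the $H^{s,0}\cap H^{0,s}$-type norms of $u$ grow at most like $t^\delta$ for arbitrarily small $\delta>0$. This in turn guarantees $R_j=o(t^{-1})$ and makes the ODE reduction self-consistent, yielding both global existence and \eqref{decay1}.

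The main obstacle, I expect, is the rigorous error analysis within the limited regularity range $1<s<2$: extracting a remainder of order $t^{-1-\delta}$ from the quadratic terms requires commutator estimates between $J_j$ and nonlinearities of the form $|u_j|u_j$ and $\overline{u_j}u_k$, together with fractional Leibniz-type inequalities. These technical ingredients have essentially been developed in \cite{L1} and \cite{L2} for one- and two-component problems, so the adaptation to the present three-component system should be routine rather than introduce genuinely new difficulties.
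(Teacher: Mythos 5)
Your proposal follows essentially the same route as the paper: the factorization $\mathcal{U}(t)=\mathcal{M}(t)\mathcal{D}(t)\mathcal{G}\mathcal{M}(t)$ reducing to the profile ODE, the weighted energy $\sum_j\kappa_j|\alpha_j|^2$ whose cubic interaction term has vanishing imaginary part by \eqref{assump_mu} and nonpositive principal part by \eqref{assump_lambda}, and a bootstrap in which the $H^{s,0}$ and weighted norms are allowed to grow like a small power of $t$. The paper implements the same scheme with the identical key cancellation, so your plan is correct.
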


In fact, $\lambda_j|u_j|u_j$ was not included in the nonlinearity 
considered in \cite{L1} and \cite{L2}. 
However, as verified in Section~\ref{sec_apriori}, 
it is straightforward to modify 
the proof under the assumption \eqref{assump_lambda} on the coefficient 
$\lambda_j$. 
In view of the conservation law
$$
\sum_{j=1}^{3}\kappa_j 
 \left(
  \|u_j(t,\cdot)\|_{L^2}^2 
  - 2\imagpart \lambda_j \int_0^t\|u_j(\tau,\cdot)\|_{L^3}^3d\tau
 \right)
=
\sum_{j=1}^{3}\kappa_j \|u_j(0,\cdot)\|_{L^2}^2, 
$$
it may be natural to regard  \eqref{assump_lambda} as 
a kind of  dissipativeness condition. 
This will lead us to the following question:  
{\em 
Is the decay rate $O(t^{-1})$ in \eqref{decay1} enhanced
if the inequalities in \eqref{assump_lambda} are strict?} 
To the authors' knowledge, there is no previous result which answers this 
question except the case where the system is decoupled, i.e., 
$\mu_1=\mu_2=\mu_3=0$. In the decoupled case, the problem is reduced  to the  
single equation 
\begin{align}
 i\pa_t v+\frac{1}{2}\Delta v =\lambda |v|v,
  \quad t>0,\ \ x \in \R^2
\label{singleNLS}
\end{align}
with $\imagpart \lambda<0$, and the solution $v(t,x)$ decays like 
$O((t\log t)^{-1})$ in the sense of $L^{\infty}$ for small
initial data, according to \cite{Sh} 
(see also \cite{HN}, \cite{HNS} and \cite{Su}). 
However, the proof of \cite{Sh}  is not applicable to the present 
setting because it heavily depends on the facts that the solution $v(t,x)$ to 
\eqref{singleNLS} 
is well-approximated by 
$$
 \frac{e^{i\frac{|x|^2}{2t}}}{it} \alpha\left(t,\frac{x}{t}\right), 
$$
where $\alpha(t,\xi)$ solves 
$$
 i\pa_t \alpha  =\frac{\lambda}{t} |\alpha|\alpha  +o(t^{-1}),
$$ 
and that $\alpha(t,\xi)$ behaves like 
$$
  \frac{\alpha(1, \xi) }
  {1-|\alpha(1, \xi)|\imagpart \lambda \log t}
 \exp 
  \left(
    i\frac{\realpart \lambda}{\imagpart \lambda} 
    \log(1- |\alpha(1, \xi)|\imagpart \lambda \log t)
  \right)
$$
as $t\to +\infty$ if $\imagpart \lambda<0$. 
On the other hand, the corresponding reduced ODE system for \eqref{eq} is 
\begin{align}
 \left\{\begin{array}{l}
 \dis{i\pa_t \alpha_1
 =\frac{1}{t} \left( \lambda_1 |\alpha_1|\alpha_1 
  + 
  \mu_1 \overline{\alpha_2} \alpha_3 \right) +o(t^{-1}),}\\[3mm]
 \dis{i\pa_t \alpha_2 
 =\frac{1}{t} \left( \lambda_2 |\alpha_2|\alpha_2 
  + 
  \mu_2 \overline{\alpha_1} \alpha_3 \right) +o(t^{-1}),}\\[3mm]
 \dis{i\pa_t \alpha_3
 =\frac{1}{t} \left( \lambda_3 |\alpha_3|\alpha_3 
  + 
  \mu_3  \alpha_1 \alpha_2 \right) +o(t^{-1}),}
 \end{array} \right.
\label{profile}
\end{align}
which is much more complicated than the single case. 

The  aim of this  paper is  to give an affirmative answer to the above 
question by providing with an enhanced decay estimate 
{of order $O((t\log t)^{-1})$}. 
The novelty of the present approach is that it does not rely on 
the explicit form of the asymptotic profile of the solution at all. 
The main result is as follows.

\begin{thm} \label{thm_main}
Suppose that the assumptions of Proposition~\ref{prp_basic} are fulfilled.
Let $u$ be the  solution to \eqref{eq}--\eqref{data} whose existence is 
guaranteed by Proposition~\ref{prp_basic}. 
If 
\begin{align}
 \imagpart \lambda_j<0 \ \ \mbox{for $j=1,2,3$}
 \label{assump_lambda_strict}
\end{align} 
is satisfied, then 
there exists a positive constant $C_1$ such that 
\begin{align*}
 \|u(t,\cdot)\|_{L^{\infty}} 
 \le 
 \frac{C_1}{(1+t)\log (2+t)}
\end{align*} 
for all $t\ge 0$.
\end{thm}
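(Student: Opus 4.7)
The plan is to introduce profiles $\alpha_j(t,\xi)$ for each component via the factorization of the free Schr\"odinger evolution, reduce \eqref{eq} to the pointwise ODE system~\eqref{profile} with \emph{quantitatively controlled} remainders, and then run a pointwise-in-$\xi$ energy estimate on the weighted sum $E(t,\xi):=\sum_{j=1}^3\kappa_j|\alpha_j(t,\xi)|^2$, exploiting \eqref{assump_mu} to produce an exact cancellation of the cross interaction. The point is that, even though the asymptotic behavior of each individual $\alpha_j$ is hard to pin down, the scalar quantity $E$ alone satisfies a clean differential inequality.

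\textbf{Reduction to the ODE system.} With $M_j(t,x):=e^{im_j|x|^2/(2t)}$, I would define $\alpha_j(t,\xi)$ through the standard factorization $e^{it\Delta/(2m_j)}=M_j(t)\,\mathcal{D}_j(t)\,\mathcal{F}\,M_j(t)$ of the free evolution, so that $u_j(t,x)=(it/m_j)^{-1}M_j(t,x)\alpha_j(t,x/t)+r_j(t,x)$ with $\|r_j(t)\|_{L^\infty}=O(t^{-1-\gamma})$ for some $\gamma=\gamma(s)>0$ produced by the regularity of Proposition~\ref{prp_basic} ($1<s<2$). Substituting into~\eqref{eq}, the mass resonance~\eqref{assump_m} removes the oscillating phase $\exp(i(m_3-m_1-m_2)|\xi|^2 t/2)$ from the interaction terms, leaving the autonomous form
\[
 i\partial_t\alpha_j(t,\xi)=\tfrac{1}{t}\bigl(\lambda_j|\alpha_j|\alpha_j+\mu_j\cdot(\text{cross})_j\bigr)+R_j(t,\xi),
\]
with the crucial quantitative upgrade $\sup_\xi|R_j(t,\xi)|=O(t^{-1-\gamma})$ that makes the remainder integrable in $t$.

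\textbf{Energy identity and differential inequality.} Differentiating $E$ pointwise in $\xi$ and using the reduced system, a short manipulation (writing $Z:=\alpha_1\alpha_2\overline{\alpha_3}$) shows that the total $\mu_j$-contribution to $\partial_t E$ equals
\[
 \tfrac{2}{t}\,\imagpart\Bigl[\bigl(\kappa_3\mu_3-\overline{\kappa_1\mu_1+\kappa_2\mu_2}\bigr)Z\Bigr],
\]
which \emph{vanishes identically} by~\eqref{assump_mu}. Hence
\[
 \partial_t E(t,\xi)=\tfrac{2}{t}\sum_{j=1}^3\kappa_j(\imagpart\lambda_j)|\alpha_j|^3+O(t^{-1-\gamma}),
\]
and invoking~\eqref{assump_lambda_strict} together with the elementary lower bound $\sum_j|\alpha_j|^3\gtrsim E^{3/2}$ yields, for some $c_*>0$,
\[
 \partial_t E(t,\xi)\le -\tfrac{c_*}{t}\,E(t,\xi)^{3/2}+\tfrac{C}{t^{1+\gamma}}.
\]
The unperturbed ODE $f'=-(c_*/t)f^{3/2}$ has solutions $f(t)\asymp(\log t)^{-2}$; since the perturbation $C t^{-1-\gamma}$ is integrable, a comparison argument (e.g.\ through the auxiliary quantity $F:=(E+C'\,t^{-\gamma})^{-1/2}$, which satisfies $F'\ge c_*/(2t)-o(1/t)$) gives $E(t,\xi)\le C(\log t)^{-2}$ uniformly in $\xi$. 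Thus $\sup_\xi|\alpha_j(t,\xi)|\le C/\log t$, and reinserting into $u_j=(it/m_j)^{-1}M_j\alpha_j(t,x/t)+r_j$ yields the claimed bound.

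\textbf{Expected main obstacle.} The conceptually novel step is the cancellation in the energy identity, which bypasses any need to know the individual asymptotic behavior of the $\alpha_j$'s; that part is essentially algebra once the set-up is in place. The harder technical step is the reduction: upgrading the generic $o(t^{-1})$ error implicit in~\eqref{profile} to a genuinely integrable $O(t^{-1-\gamma})$ error, uniformly in $\xi$, using only the regularity $H^{s,0}\cap H^{0,s}$ with $s<2$ and for quadratic nonlinearities $|u_j|u_j$ that are merely Lipschitz. This demands careful commutator estimates against the generators $\mathcal{J}_j=x+(it/m_j)\nabla$, whose action on the cross nonlinearity is compatible precisely because of~\eqref{assump_m}; the framework parallels that of~\cite{L1,L2} and Section~\ref{sec_apriori}, but must now be tracked quantitatively rather than qualitatively.
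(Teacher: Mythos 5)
Your strategy is essentially the paper's own: the profile $\alpha=\mathcal{G}\,\mathcal{U}(-t)u$, the quantitative remainder bound (which is Lemma~\ref{lem_aux4} combined with the a priori bound \eqref{apriori_final}, giving \eqref{Est_r} with rate $t^{-1-\gamma/3}$ --- so the "main obstacle" you flag is already disposed of by Section~\ref{sec_apriori}, whose conclusions the theorem is allowed to assume), the cancellation of the cross terms via \eqref{assump_mu} (your algebra with $Z=\alpha_1\alpha_2\overline{\alpha_3}$ is correct and is exactly what underlies \eqref{strict_dissip}), and the resulting pointwise inequality $\partial_t E \le -\tfrac{c_*}{t}E^{3/2}+Ct^{-1-\gamma'}$ for $E=\nu_A(\alpha)^2$ all coincide with the paper.

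The gap is in the final integration of that differential inequality. For your auxiliary function $F=(E+C't^{-\gamma})^{-1/2}$ one gets, once $C'\gamma\ge C$, only $F'\ge \tfrac{c_*}{2t}\bigl(\tfrac{E}{E+C't^{-\gamma}}\bigr)^{3/2}$, and the factor $\bigl(\tfrac{E}{E+C't^{-\gamma}}\bigr)^{3/2}$ is \emph{not} $1-o(1)$ uniformly: whenever $E(t)\lesssim t^{-\gamma}$ it is bounded away from $1$ (possibly near $0$), so the claimed bound $F'\ge \tfrac{c_*}{2t}-o(1/t)$ fails there and $F$ need not grow like $\tfrac{c_*}{2}\log t$. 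What survives unconditionally is only $F'\ge 0$, i.e.\ $E+C't^{-\gamma}$ non-increasing; once $E$ first dips to size $t^{-\gamma}$ this yields merely a constant bound for later times, not $(\log t)^{-2}$. The argument can be repaired (e.g.\ by a first-crossing/barrier comparison against $2A(\log t)^{-2}$ with $A$ large, using that $t^{-\gamma}(\log t)^{3}$ is bounded on $[2,\infty)$), but as written the comparison step is incomplete. The paper avoids the case analysis altogether: it differentiates $(\log t)^{3}\Phi(t)$ and absorbs the positive term $\tfrac{3(\log t)^2}{t}\Phi$ into the dissipation via Young's inequality, $\Phi\le \tfrac{1}{3C_*^2(\log t)^2}+\tfrac{2C_*}{3}(\log t)\Phi^{3/2}$, which cancels $-\tfrac{2C_*(\log t)^3}{t}\Phi^{3/2}$ exactly and leaves $\tfrac{d}{dt}\bigl((\log t)^3\Phi\bigr)\le \tfrac{C}{t}+C\eps^3(\log t)^3t^{-1-\gamma/3}$; integrating gives $(\log t)^3\Phi\le C\log t$, hence $\Phi\le C(\log t)^{-2}$. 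The cubic weight is not cosmetic: the same computation with weight $(\log t)^2$ only produces $O(\log\log t/(\log t)^2)$.
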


\section{Preliminaries} \label{sec_aux}
In this section, we introduce several notations and lemmas which will be  
needed in the subsequent sections. 
In what follows, we denote by $\jb{\cdot, \cdot}_{\C^3}$ the standard scalar 
product in $\C^3$, i.e., 
$$
 \jb{z,w}_{\C^3}=\sum_{j=1}^{3}  z_j \overline{w_j}
$$
for $z=(z_j)_{j=1,2,3}$, $w=(w_j)_{j=1,2,3} \in \C^3$. 
We also write $|z|_{\C^3}=\sqrt{\jb{z,z}_{\C^3}}$, as usual. \\

First we rewrite \eqref{eq} in the abstract form: 
Put $\Lambda u=(\frac{1}{2m_j}\Delta u_j)_{j=1,2,3}$ so that  
$$
 i\pa_t u +\Lambda u=F(u),
$$
where 
$F: \C^3 \ni z=(z_j)_{j=1,2,3} \mapsto (F_j(z))_{j=1,2,3} \in \C^3$ is 
defined by
\begin{align*}
 &F_1(z)=\lambda_1|z_1|z_1 +\mu_1 \overline{z_2}z_3, \\ 
 &F_2(z)=\lambda_2|z_2|z_2 +\mu_2 \overline{z_1}z_3, \\ 
 &F_3(z)=\lambda_3|z_3|z_3 +\mu_3 z_1 z_2.
\end{align*}
Then the assumptions \eqref{assump_m}, \eqref{assump_lambda} and 
\eqref{assump_mu} on the coefficients can be interpreted as follows: 
\begin{itemize}
\item 
We define $\mathcal{E}_{\theta}:\C^3\to \C^3$ by 
$\mathcal{E}_{\theta} z = (e^{im_j \theta} z_j)_{j=1,2,3}$ for 
$z=(z_j)_{j=1,2,3} \in \C^3$ and $\theta\in \R$.
Then we have 
\begin{align}  
  \mathcal{E}_{\theta} F (z) = F(\mathcal{E}_{\theta} z) 
 \label{gauge_inv}
\end{align}
for all $\theta \in \R$ and $z \in \C^3$, 
provided that \eqref{assump_m} is satisfied.

\item
 With $\kappa_1$, $\kappa_2$, $\kappa_3$ appearing in \eqref{assump_mu}, 
we put 
$$
 A=
 \begin{pmatrix} 
  \kappa_1& 0 & 0\\ 0 &\kappa_2 & 0\\ 0 &0  & \kappa_3
 \end{pmatrix}.
$$
Then, by \eqref{assump_mu}, we have
\begin{align*}
 \jb{F(z), Az}_{\C^3} 
 = \sum_{j=1}^{3} \kappa_j \lambda_j |z_j|^3 
   + 2\kappa_3 \realpart\Bigl(\overline{\mu_3 z_1 z_2} z_3 \Bigr),
\end{align*}
whence \eqref{assump_lambda} implies 
\begin{align*}
 \imagpart\jb{F(z), Az}_{\C^3} 
 = \sum_{j=1}^{3}\kappa_j \imagpart\lambda_j |z_j|^3 \le 0
\end{align*}
for all $z \in \C^3$. 
In particular,  if \eqref{assump_lambda_strict} is satisfied, 
we can take positive constants $C^*$ and $C_*$ such that 
\begin{align}
-C^* \nu_A(z)^3\le \imagpart\jb{F(z), Az}_{\C^3}
 \le 
-C_* \nu_A(z)^3
\label{strict_dissip}
\end{align}
for $z \in \C^3$, where $\nu_A(z)=\sqrt{\jb{z,Az}_{\C^3}}$. 
Note that 
\begin{align*}
  \sqrt{\kappa_*} |z|_{\C^3} 
  \le \nu_A(z) \le  
  \sqrt{\kappa^*} |z|_{\C^3}
\end{align*}
for $z \in \C^3$, where 
$\kappa_*=\min\{\kappa_1,\, \kappa_2,\, \kappa_3 \}$ and 
$\kappa^*=\max\{\kappa_1,\, \kappa_2,\, \kappa_3 \}$. 
\end{itemize}

Next we set 
$\mathcal{U}(t)= \exp(it\Lambda)$, i.e., 
$$
 \mathcal{U}(t)\phi=
 \left( \exp\left(\frac{it}{2m_j}\Delta\right) \phi_j\right)_{j=1,2,3}
$$
for a $\C^3$-valued smooth function $\phi=(\phi_j)_{j=1,2,3}$. 
Also we set 
$$
 \mathcal{G}\phi(\xi)=\left(-im_j \hat{\phi}_j(m_j\xi)\right)_{j=1,2,3},
$$ 
where $\hat{f}$ denotes the Fourier transform of $f$, i.e.,
$$
 \hat{f}(\xi) 
 = \frac{1}{2\pi}\int_{\R^2} e^{-iy\cdot \xi} f(y) dy.
$$

Now we summarize several useful lemmas. Since they are essentially not new 
or rather standard, we will give only an outline of the proof in the Appendix.

\begin{lem} \label{lem_aux1}
Let $s \in [0,2)$. There exists a constant $C$ such that

$$
 \|F(\phi)\|_{H^{s,0}} 
 \le 
 C\|\phi\|_{L^{\infty}} \|\phi\|_{H^{s,0}}.
$$
\end{lem}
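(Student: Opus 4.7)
The plan is to decompose $F(\phi)$ into its cubic self-interaction part $L(\phi)=(\lambda_j|\phi_j|\phi_j)_{j=1,2,3}$ and its quadratic coupling part $Q(\phi)=(\mu_1\overline{\phi_2}\phi_3,\mu_2\overline{\phi_1}\phi_3,\mu_3\phi_1\phi_2)$, and to estimate the two pieces by different techniques. Since $H^{s,0}(\R^2)$ is the usual (unweighted) fractional Sobolev space $H^s(\R^2)$, the argument reduces to componentwise estimates.

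For $Q(\phi)$ each entry is a product of two components of $\phi$ (with possibly one of them conjugated, which does not affect Sobolev norms). The Kato--Ponce fractional Leibniz inequality, valid for every $s\ge 0$, yields
\begin{align*}
 \|\phi_i\phi_j\|_{H^s}
 \le C\bigl(\|\phi_i\|_{L^{\infty}}\|\phi_j\|_{H^s}+\|\phi_j\|_{L^{\infty}}\|\phi_i\|_{H^s}\bigr)
 \le C\|\phi\|_{L^{\infty}}\|\phi\|_{H^{s,0}},
\end{align*}
which immediately gives the estimate for $Q(\phi)$.

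For $L(\phi)$ it suffices to prove $\bigl\||\phi_j|\phi_j\bigr\|_{H^s}\le C\|\phi_j\|_{L^{\infty}}\|\phi_j\|_{H^s}$. When $s\in[0,1]$, I would combine the elementary pointwise bound
\begin{align*}
 \bigl||z_1|z_1-|z_2|z_2\bigr|\le 2\max(|z_1|,|z_2|)\,|z_1-z_2|
\end{align*}
with the Gagliardo seminorm characterization of $\dot H^s$ to obtain $\bigl\||\phi|\phi\bigr\|_{\dot H^s}\le 2\|\phi\|_{L^{\infty}}\|\phi\|_{\dot H^s}$, and separately the trivial $L^2$-estimate. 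When $s\in(1,2)$, I would write $s=1+\sigma$ with $\sigma\in(0,1)$ and use the pointwise chain-rule bound $\bigl|\nabla(|\phi|\phi)\bigr|\le 2|\phi|\,|\nabla\phi|$, so that the problem reduces to estimating $\bigl\||\phi|\,\nabla\phi\bigr\|_{H^{\sigma}}$; an application of Kato--Ponce together with the inequality $\bigl\||\phi|\bigr\|_{H^{\sigma}}\le\|\phi\|_{H^{\sigma}}$, which itself follows from the $s\le 1$ argument applied to the $1$-Lipschitz map $z\mapsto |z|$, yields the desired bound.

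The only genuinely delicate point is the cubic nonlinearity in the regime $s\in(1,2)$: because $z\mapsto|z|z$ is $C^{1,1}$ but not $C^2$, the classical Moser-type fractional chain rule does not directly apply. The way around this is that $|z|z$ is scaling-critical with $|D(|z|z)|\le 2|z|$ and bounded (homogeneous of degree $0$) second derivatives, so one can perform a single honest differentiation by hand and then appeal to fractional product rules at the reduced order $\sigma=s-1<1$. Everything else is routine.
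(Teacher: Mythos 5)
Your treatment of the quadratic coupling terms $\overline{\phi_k}\phi_l$ via the Kato--Ponce fractional Leibniz rule is exactly what the paper does (its estimate \eqref{est_aux2}), and your $s\in[0,1]$ argument for $|\phi_j|\phi_j$ via the pointwise bound $\bigl||z_1|z_1-|z_2|z_2\bigr|\le 2\max(|z_1|,|z_2|)|z_1-z_2|$ and the Gagliardo seminorm is sound. Note that the paper itself does not prove the chain-rule estimate $\|(-\Delta)^{s/2}(|f|f)\|_{L^2}\le C\|f\|_{L^\infty}\|(-\Delta)^{s/2}f\|_{L^2}$ for $s\in[0,2)$: it simply quotes it (estimate \eqref{est_aux1}) from the Ginibre--Ozawa--Velo and Kato--Ponce references. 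You correctly identify that the only delicate regime is $s\in(1,2)$, where $z\mapsto|z|z$ is $C^{1,1}$ but not $C^2$, but your proposed resolution of that case has a genuine gap.

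Concretely: after one honest differentiation you must estimate $\bigl\||\phi|\,\nabla\phi\bigr\|_{H^{\sigma}}$ with $\sigma=s-1\in(0,1)$, and the form of Kato--Ponce you invoke (pairing $\bigl\||\phi|\bigr\|_{H^{\sigma}}$ with the sup norm of the other factor) produces the term $\bigl\||\phi|\bigr\|_{H^{\sigma}}\|\nabla\phi\|_{L^{\infty}}$. Since $s<2$, one only knows $\nabla\phi\in H^{s-1}(\R^2)$ with $s-1<1$, and $H^{s-1}(\R^2)\not\hookrightarrow L^{\infty}(\R^2)$, so $\|\nabla\phi\|_{L^{\infty}}$ is not controlled by $\|\phi\|_{L^{\infty}}\|\phi\|_{H^{s,0}}$ (and need not even be finite for the functions under consideration). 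To close the argument one needs the mixed-Lebesgue form of the fractional Leibniz rule, $\|fg\|_{\dot{H}^{\sigma}}\le C\bigl(\|f\|_{L^{\infty}}\|g\|_{\dot{H}^{\sigma}}+\|f\|_{\dot{W}^{\sigma,p}}\|g\|_{L^{q}}\bigr)$ with $1/p+1/q=1/2$, together with Gagliardo--Nirenberg interpolation of $\|\nabla\phi\|_{L^{q}}$ and $\bigl\||\phi|\bigr\|_{\dot{W}^{\sigma,p}}$ between $\|\phi\|_{L^{\infty}}$ and $\|\phi\|_{\dot{H}^{1+\sigma}}$, chosen so that the exponents on $\|\phi\|_{\dot{H}^{1+\sigma}}$ sum to one; this is essentially the content of the cited Ginibre--Ozawa--Velo lemma. (A minor slip besides: $|\phi_j|\phi_j$ is the \emph{quadratic} self-interaction, not cubic.)
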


\begin{lem} \label{lem_aux2}
Let $\gamma \in [0,1]$ and $s>1+2\gamma$. 
There exists a constant $C$ such that
$$
\left|\| \phi\|_{L^{\infty}}-t^{-1} \|\mathcal{G} \mathcal{U}(-t) \phi \|_{L^{\infty}}\right| 
 \le 
 Ct^{-1-\gamma} \| \mathcal{U}(-t) \phi\|_{H^{0,s}}
$$ 
for $t\geq 1$.
\end{lem}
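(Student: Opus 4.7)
The plan is to use the explicit $MDFM$-factorization of the Schr\"odinger propagator and control the discrepancy introduced by the quadratic phase. Set $\psi=\mathcal{U}(-t)\phi$ and work componentwise. Writing $(\mathcal{U}(t)\psi)_j=e^{it\Delta/(2m_j)}\psi_j$ as the usual 2D Fresnel integral and completing the square gives the pointwise identity
\begin{equation*}
 \phi_j(x) = \frac{-im_j}{t}\, e^{im_j|x|^2/(2t)}\,
 \mathcal{F}\bigl[e^{im_j|\cdot|^2/(2t)}\psi_j\bigr](m_j x/t),
 \quad j=1,2,3.
\end{equation*}
Since $(\mathcal{G}\psi)_j(\xi)=-im_j\hat{\psi}_j(m_j\xi)$, the leading term in this identity is exactly $e^{im_j|x|^2/(2t)}\,t^{-1}(\mathcal{G}\psi)_j(x/t)$; its $L^\infty_x$-norm equals $t^{-1}\|(\mathcal{G}\psi)_j\|_{L^\infty}$ because the prefactor is unimodular and $x\mapsto x/t$ is a bijection of $\R^2$. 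Isolating this leading term and applying the reverse triangle inequality yields
\begin{equation*}
 \bigl|\,\|\phi_j\|_{L^\infty} - t^{-1}\|(\mathcal{G}\psi)_j\|_{L^\infty}\,\bigr|
 \le \frac{|m_j|}{t}\,\bigl\|\mathcal{F}\bigl[(e^{im_j|\cdot|^2/(2t)}-1)\psi_j\bigr]\bigr\|_{L^\infty}.
\end{equation*}

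Next I would bound the $L^\infty$-norm of the Fourier transform by the $L^1$-norm of its argument (Hausdorff--Young), thereby reducing everything to an estimate of $\|(e^{im_j|y|^2/(2t)}-1)\psi_j\|_{L^1_y}$. The essential pointwise input is the elementary interpolation $|e^{i\theta}-1|\le 2|\theta|^\gamma$, valid for every $\theta\in\R$ and every $\gamma\in[0,1]$, which immediately gives $|e^{im_j|y|^2/(2t)}-1|\le C\, t^{-\gamma}|y|^{2\gamma}$. Applying Cauchy--Schwarz against the weight $(1+|y|^2)^{s/2}$ then produces
\begin{equation*}
 \bigl\||y|^{2\gamma}\psi_j\bigr\|_{L^1}
 \le \biggl(\int_{\R^2}\frac{|y|^{4\gamma}}{(1+|y|^2)^s}\,dy\biggr)^{1/2} \|\psi_j\|_{H^{0,s}},
\end{equation*}
and the remaining integral converges precisely when $4\gamma-2s+2<0$, i.e.\ when $s>1+2\gamma$, which is the standing hypothesis. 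Collecting the estimates and summing over $j=1,2,3$ recovers the claimed inequality, since $\psi=\mathcal{U}(-t)\phi$.

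No serious obstacle is expected: the scheme is classical, consisting of the $MDFM$-factorization plus an $L^1\to L^\infty$ control of the Fourier transform of the phase error. The only point that deserves care is the exponent matching between the gain $t^{-\gamma}$ on the phase discrepancy and the polynomial weight $s$ on $\psi$; the threshold $s>1+2\gamma$ is dictated solely by the convergence at infinity of $\int_{\R^2}|y|^{4\gamma}(1+|y|^2)^{-s}\,dy$ in two space dimensions, so this condition is sharp for the present approach.
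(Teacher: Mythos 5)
Your proof is correct and follows essentially the same route as the paper: your explicit Fresnel-integral identity is the $\mathcal{M}\mathcal{D}\mathcal{G}\mathcal{M}$ factorization written out componentwise, and the error is controlled exactly as in the paper via $\|\hat f\|_{L^\infty}\le C\|f\|_{L^1}$, the bound $|e^{i\theta}-1|\le C|\theta|^{\gamma}$, and Cauchy--Schwarz against the weight $(1+|y|^2)^{s/2}$, with the same threshold $s>1+2\gamma$.
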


\begin{lem} \label{lem_aux3}
Let $s \in [0,2)$. Assume that \eqref{assump_m} is satisfied. 
Then there exists a constant $C$ such that
$$
 \|\mathcal{U}(-t) F(\phi)\|_{H^{0,s}} 
 \le 
 C\|\phi\|_{L^{\infty}} \|\mathcal{U}(-t) \phi\|_{H^{0,s}}.
$$
\end{lem}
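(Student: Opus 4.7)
The plan is to conjugate the weight $\jb{x}^s$ through the free Schr\"odinger evolution and then estimate the resulting $L^2$--Sobolev norm of $F_j$ applied to a gauge-rotated profile.

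\textbf{Step 1 (setup via the Galilean operator).} The key tool is the operator $\mathcal{J}_j(t):=x+(it/m_j)\nabla$, which is the conjugate of $x$ by $\mathcal{U}_j(t)$, so that $\jb{x}^s\mathcal{U}_j(-t)\phi_j = \mathcal{U}_j(-t)(1+|\mathcal{J}_j|^2)^{s/2}\phi_j$ in $L^2$. Combined with the factorization $\mathcal{J}_j = M_j(t)\cdot(it/m_j)\nabla\cdot M_j(-t)$, where $M_j(t):=e^{im_j|x|^2/(2t)}$, this yields
\[
(1+|\mathcal{J}_j|^2)^{s/2}\phi_j = M_j(t)\bigl(1-(t/m_j)^2\Delta\bigr)^{s/2}\psi_j, \qquad \psi_j:=M_j(-t)\phi_j,
\]
with $|\psi_j|=|\phi_j|$. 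Crucially, the mass-resonance condition \eqref{assump_m} delivers the gauge identity $M_j(-t)F_j(\phi)=F_j(\psi)$ (an instance of \eqref{gauge_inv} with $\theta=-|x|^2/(2t)$). The lemma is thereby reduced to
\[
\|(1-(t/m_j)^2\Delta)^{s/2}F_j(\psi)\|_{L^2} \le C\|\psi\|_{L^\infty}\sum_{k=1}^{3}\|(1-(t/m_k)^2\Delta)^{s/2}\psi_k\|_{L^2},
\]
where the norms with distinct $m_k$ are mutually equivalent (uniformly in $t$) up to constants depending only on the fixed ratios $|m_j/m_k|$.

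\textbf{Step 2 (the reduced estimate).} For $s=0$ this is simply $\|F(\psi)\|_{L^2}\le C\|\psi\|_\infty\|\psi\|_{L^2}$, immediate from the essentially quadratic form of $F$. For $s=1$ the mass resonance gives the clean Leibniz identities
\[
\mathcal{J}_3(\mu_3\phi_1\phi_2) = \frac{\mu_3 m_1}{m_3}(\mathcal{J}_1\phi_1)\phi_2+\frac{\mu_3 m_2}{m_3}\phi_1(\mathcal{J}_2\phi_2)
\]
(and analogous formulas for $\mathcal{J}_1F_1$, $\mathcal{J}_2F_2$), together with $\mathcal{J}_j(\lambda_j|\phi_j|\phi_j)=\lambda_j|\phi_j|\mathcal{J}_j\phi_j+\lambda_j\phi_j(it/m_j)\nabla|\phi_j|$, whose last term is pointwise controlled by $|\phi_j||\mathcal{J}_j\phi_j|$ via $|\nabla|\psi_j||\le|\nabla\psi_j|$ and the factorization of $\mathcal{J}_j$. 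For the remaining range $s\in(0,2)$, I would apply the Kato--Ponce fractional Leibniz inequality to the bilinear terms $\overline{\psi_k}\psi_l$ and the fractional chain rule to the self-interaction $|\psi_j|\psi_j$ (with $s=0$ and $s=1$ recovered either directly or by interpolation).

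\textbf{Step 3 (main obstacle).} The principal difficulty is the fractional chain rule for the non-smooth nonlinearity $z\mapsto|z|z$: since this map is only $C^{1,1}$, the clean estimate $\||D|^s(|f|f)\|_{L^2}\le C\|f\|_\infty\||D|^sf\|_{L^2}$ is available exactly in the range $s<2$ (via, e.g., a Christ--Weinstein-type argument, or by writing $|f|f$ as a paraproduct and controlling the high-high interaction). This sharp restriction is precisely why the lemma must be stated for $s\in[0,2)$; it also explains why the paper's proposition is formulated with $s<2$.
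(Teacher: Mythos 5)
Your proposal is correct and follows essentially the same route as the paper: conjugating the spatial weight through $\mathcal{U}(-t)$ to obtain $(t/m_j)^s(-\Delta)^{s/2}$ acting on the gauge-rotated function $\mathcal{M}(-t)\phi$, invoking the mass-resonance identity $\mathcal{M}(-t)F(\phi)=F(\mathcal{M}(-t)\phi)$, and then applying the fractional chain rule for $|f|f$ (valid precisely for $s<2$) together with the Kato--Ponce fractional Leibniz rule. The paper phrases the conjugation as the operator identity $\mathcal{U}(t)|x|^s\mathcal{U}(-t)=\mathcal{M}(t)\,(t/m_j)^s(-\Delta)^{s/2}\,\mathcal{M}(-t)$ rather than via $\mathcal{J}_j$, but this is the same computation.
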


\begin{lem} \label{lem_aux4}
Let $\gamma \in [0,1/2)$ and $1+2\gamma<s<2$. 
Assume that \eqref{assump_m} is satisfied. 
Then there exists a constant $C$ such that
$$
 \left\|
   \mathcal{G} \mathcal{U}(-t) F(\phi)
   -
   \frac{1}{t} F\bigl(\mathcal{G} \mathcal{U}(-t)\phi\bigr)
 \right\|_{L^{\infty}} 
 \le 
 Ct^{-1-\gamma} \|\mathcal{U}(-t) \phi \|_{H^{0,s}}^2
$$
for $t\geq 1$.
\end{lem}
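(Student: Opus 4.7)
The plan is to exploit the 2D MDFM-type factorization of the Schr\"odinger group together with the gauge invariance \eqref{gauge_inv} (which is where the mass resonance \eqref{assump_m} enters) to reduce the bound to two phase-removal estimates. For brevity, write $\mathcal{M}_j(t)$ for multiplication by $e^{im_j|x|^2/(2t)}$, and let
$$
 w_j(\eta):=\mathcal{G}_j\bigl[\mathcal{M}_j(t)\,\mathcal{U}_{m_j}(-t)\phi_j\bigr](\eta),
$$
where $\mathcal{G}_j$ and $\mathcal{U}_{m_j}(-t)$ denote the $j$-th components of $\mathcal{G}$ and $\mathcal{U}(-t)$.

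The first step is an algebraic identity, obtained by direct calculation with the Schr\"odinger kernel on $\R^2$: for any sufficiently regular $h$,
$$
 \mathcal{G}_j\bigl[\mathcal{M}_j(t)\,\mathcal{U}_{m_j}(-t) h\bigr](\eta)
 = t\,e^{-im_jt|\eta|^2/2}\,h(t\eta).
$$
Setting $h=\phi_j$ yields the factorization $\phi_j(x)=t^{-1}e^{im_j|x|^2/(2t)}\,w_j(x/t)$. Substituting into $F_j(\phi)$ and invoking \eqref{gauge_inv} to pull the common quadratic phase through the nonlinearity gives $F_j(\phi)(x)=t^{-2}e^{im_j|x|^2/(2t)}\,F_j(w)(x/t)$. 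Applying the identity once more with $h=F_j(\phi)$ now collapses all phases cleanly and produces
$$
 \mathcal{G}_j\bigl[\mathcal{M}_j(t)\,\mathcal{U}_{m_j}(-t)F_j(\phi)\bigr] = \tfrac{1}{t}\, F_j(w).
$$
From this I obtain the decomposition
\begin{align*}
 &\mathcal{G}_j\mathcal{U}_{m_j}(-t)F_j(\phi)-\tfrac{1}{t}F_j(\mathcal{G}\mathcal{U}(-t)\phi)\\
 &\qquad =\tfrac{1}{t}\bigl[F_j(w)-F_j(\mathcal{G}\mathcal{U}(-t)\phi)\bigr]-\mathcal{G}_j\bigl[(\mathcal{M}_j(t)-1)\,\mathcal{U}_{m_j}(-t)F_j(\phi)\bigr].
\end{align*}

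The remaining estimates both rest on the pointwise bound $|e^{im_j|x|^2/(2t)}-1|\le Ct^{-\gamma}|x|^{2\gamma}$ for $\gamma\in[0,1]$, together with the 2D Sobolev-type embedding $\|\mathcal{G}_j g\|_{L^\infty}\le C\|g\|_{H^{0,s'}}$ valid for any $s'>1$. The hypothesis $s>1+2\gamma$ permits choosing $s'\in(1,s-2\gamma]$, producing the generic phase-removal bound
$$
 \bigl\|\mathcal{G}_j[(\mathcal{M}_j(t)-1)g]\bigr\|_{L^\infty}\le Ct^{-\gamma}\|g\|_{H^{0,s}}.
$$
For the first summand of the decomposition I apply the quadratic Lipschitz inequality $|F(z)-F(z')|_{\C^3}\le C(|z|_{\C^3}+|z'|_{\C^3})|z-z'|_{\C^3}$ together with the identity $w_j-\mathcal{G}_j\mathcal{U}_{m_j}(-t)\phi_j=\mathcal{G}_j[(\mathcal{M}_j(t)-1)\mathcal{U}_{m_j}(-t)\phi_j]$ and the phase-removal bound (with $g=\mathcal{U}(-t)\phi$) to arrive at $Ct^{-1-\gamma}\|\mathcal{U}(-t)\phi\|_{H^{0,s}}^2$. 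For the second summand I apply the same bound with $g=\mathcal{U}_{m_j}(-t)F_j(\phi)$, invoke Lemma~\ref{lem_aux3} to get $\|\mathcal{U}(-t)F(\phi)\|_{H^{0,s}}\le C\|\phi\|_{L^\infty}\|\mathcal{U}(-t)\phi\|_{H^{0,s}}$, and finally Lemma~\ref{lem_aux2} combined with Sobolev embedding to bound $\|\phi\|_{L^\infty}\le Ct^{-1}\|\mathcal{U}(-t)\phi\|_{H^{0,s}}$ for $t\ge 1$. The resulting contribution is again $Ct^{-1-\gamma}\|\mathcal{U}(-t)\phi\|_{H^{0,s}}^2$.

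The main obstacle is identifying the key algebraic identity above and recognizing that mass resonance \eqref{assump_m} is precisely what makes the quadratic phase $e^{im_j|x|^2/(2t)}$ transport cleanly through the nonlinearity $F_j$; once these observations are in place, the two remaining error terms are controlled by the same phase-removal mechanism together with routine applications of Lemmas~\ref{lem_aux2} and \ref{lem_aux3}.
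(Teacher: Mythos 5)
Your proof is correct and follows essentially the same route as the paper: your $w$ is exactly $\mathcal{W}(t)\beta$ in the paper's notation, your two error terms coincide with the paper's $R_2/t$ and $R_1/t$, and both arguments hinge on the same phase-removal estimate based on $|e^{i\theta}-1|\le C|\theta|^{\gamma}$ under the condition $s>1+2\gamma$. The only cosmetic difference is that you control the $R_1$-type term through Lemmas~\ref{lem_aux3} and \ref{lem_aux2}, whereas the paper uses Lemma~\ref{lem_aux1} together with the Sobolev embedding.
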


\section{A priori estimate}\label{sec_apriori}
The argument of this section is almost the same as those of the previous works 
\cite{HLN1}, \cite{L1}, \cite{L2}, \cite{Sh}.  Let $u(t)$ be the solution to 
\eqref{eq}--\eqref{data} in the interval $[0,T]$. We define 
$$
 \|u\|_{X_T}
 =\sup_{t\in [0,T]} \Bigl\{
 (1+t) \|u(t,\cdot)\|_{L^{\infty}} 
 + (1+t)^{-\gamma/3} 
  \bigl(
    \|u(t,\cdot)\|_{H^{s,0}} 
    + 
    \|\mathcal{U}(-t)u(t,\cdot)\|_{H^{0,s}}
  \bigr)
 \Bigr\},
$$
where $1<s<2$ and $0<\gamma <\frac{s-1}2$. We also set 
$\eps=\|\varphi\|_{H^{s,0}}+\|\varphi\|_{H^{0,s}}$.

\begin{lem} \label{lem_apriori}
There exists a constant $C$, independent of $T$, such that
 $$
  \|u\|_{X_T} 
  \le 
 C \eps + C \|u\|_{X_T}^2.
 $$
\end{lem}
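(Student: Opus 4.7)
The plan is to control each of the three pieces of the $X_T$-norm separately via Duhamel's formula together with the lemmas of Section~\ref{sec_aux}, and then combine. For $\|u(t)\|_{H^{s,0}}$, I would apply $u(t)=\mathcal{U}(t)\varphi-i\int_0^t \mathcal{U}(t-\tau)F(u(\tau))\,d\tau$, use that $\mathcal{U}(t)$ is an isometry on $H^{s,0}$, and bound the nonlinearity by Lemma~\ref{lem_aux1}. Together with the definition of $\|u\|_{X_T}$ this yields
\[
\|u(t)\|_{H^{s,0}}\le\eps+C\|u\|_{X_T}^{\,2}\int_0^t(1+\tau)^{\gamma/3-1}\,d\tau\le\eps+C\|u\|_{X_T}^{\,2}(1+t)^{\gamma/3},
\]
and dividing by $(1+t)^{\gamma/3}$ gives the required contribution. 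The treatment of $\|\mathcal{U}(-t)u(t)\|_{H^{0,s}}$ is entirely parallel: starting from $\mathcal{U}(-t)u(t)=\varphi-i\int_0^t \mathcal{U}(-\tau)F(u(\tau))\,d\tau$ and invoking Lemma~\ref{lem_aux3} in place of Lemma~\ref{lem_aux1} produces the same bound.

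The more delicate task is the $L^\infty$-bound. For $t\in[0,1]$ this is handled by local well-posedness, so assume $t\ge 1$. Since $s>1+2\gamma$, Lemma~\ref{lem_aux2} reduces matters to estimating the profile $\alpha(t,\xi):=\mathcal{G}\mathcal{U}(-t)u(t,\cdot)(\xi)$ in $L^\infty_\xi$; the remainder $Ct^{-1-\gamma}\|\mathcal{U}(-t)u\|_{H^{0,s}}$, multiplied by $(1+t)$, is at most $C(1+t)^{-2\gamma/3}(\eps+C\|u\|_{X_T}^{\,2})\le C\eps+C\|u\|_{X_T}^{\,2}$ by the previous step. Differentiating $\alpha$ in $t$, applying $\mathcal{G}$ to $\pa_t(\mathcal{U}(-t)u)=-i\mathcal{U}(-t)F(u)$, and using Lemma~\ref{lem_aux4} (valid since $1+2\gamma<s<2$) yields the profile ODE
\[
i\pa_t\alpha(t,\xi)=\frac{1}{t}F(\alpha(t,\xi))+r(t,\xi),\qquad \|r(t,\cdot)\|_{L^\infty_\xi}\le Ct^{-1-\gamma}\|\mathcal{U}(-t)u\|_{H^{0,s}}^{\,2}.
\]
Taking the scalar product with $A\alpha$ in $\C^3$ and exploiting $\imagpart\jb{F(\alpha),A\alpha}_{\C^3}\le 0$ (already recorded in Section~\ref{sec_aux} as a consequence of \eqref{assump_lambda}), one derives, pointwise in $\xi$, the inequality $\pa_t\nu_A(\alpha)\le C|r(t,\xi)|_{\C^3}$. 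Integrating from $1$ to $t$ and using that $-1-\gamma+2\gamma/3<-1$ makes the resulting time-integral finite, one obtains $\nu_A(\alpha(t,\xi))\le \nu_A(\alpha(1,\xi))+C\|u\|_{X_T}^{\,2}\le C\eps+C\|u\|_{X_T}^{\,2}$ uniformly in $\xi$, with the bound on $\nu_A(\alpha(1,\cdot))$ coming from Lemma~\ref{lem_aux2} at $t=1$ and local well-posedness.

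The principal obstacle lies in the $L^\infty$ step, specifically in converting the abstract dissipativity $\imagpart\lambda_j\le0$ into a uniform-in-$\xi$ bound on the profile $\alpha$, and in choosing $\gamma\in(0,(s-1)/2)$ so that the remainder of Lemma~\ref{lem_aux4} integrates to an $O(1)$ quantity while the allowed $(1+t)^{\gamma/3}$ growth of $\|\mathcal{U}(-t)u\|_{H^{0,s}}$ propagates consistently through the Duhamel estimates. Everything else reduces to a routine Gronwall-type argument, and combining the three bounds yields $\|u\|_{X_T}\le C\eps+C\|u\|_{X_T}^{\,2}$ as required.
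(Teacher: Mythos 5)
Your proposal is correct and follows essentially the same route as the paper: Duhamel/energy estimates with Lemmas~\ref{lem_aux1} and \ref{lem_aux3} for the weighted Sobolev norms, the profile ODE for $\alpha=\mathcal{G}\mathcal{U}(-t)u$ with remainder controlled by Lemma~\ref{lem_aux4}, the dissipativity $\imagpart\jb{F(\alpha),A\alpha}_{\C^3}\le 0$ to integrate $\nu_A(\alpha)$ uniformly in $\xi$, and Lemma~\ref{lem_aux2} to return to $\|u\|_{L^\infty}$. The only cosmetic difference is that for $t\le 1$ the paper bounds $\|u\|_{L^\infty}$ by Sobolev embedding into the already-established $H^{s,0}$ estimate (which gives the required $C\eps+C\|u\|_{X_T}^2$ form directly) rather than appealing to local well-posedness.
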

\begin{rmk}
The above estimate implies that there exists a constant $C_0 >0$, 
which does not depend on $T$, such that 
\begin{equation}
 \label{apriori_final}
  \|u\|_{X_T} \le C_0\eps
\end{equation}
if we choose $\eps$ sufficiently small. 
Proposition~\ref{prp_basic} is an immediate consequence of this 
{\it a priori} bound and the standard local existence theorem.
\end{rmk}

\begin{proof}[Proof of Lemma~\ref{lem_apriori}]
In what follows, we denote several positive constants 
by the same letter $C$, which may vary from one line to another. 
First we consider the estimates for $\|u(t,\cdot)\|_{H^{s,0}}$ and 
$\|\mathcal{U}(-t)u(t,\cdot)\|_{H^{0,s}}$. 
By the standard energy inequality combined with 
Lemmas~\ref{lem_aux1} and \ref{lem_aux3}, we have  
\begin{align}
 \|u(t,\cdot)\|_{H^{s,0}} 
 \le& 
 \|\varphi\|_{H^{s,0}} 
 +  
 \int_{0}^{t} \|F(u(\tau,\cdot))\|_{H^{s,0}} d\tau
 \nonumber\\
 \le& 
 C\eps 
 + 
 C \int_{0}^{t} 
   \|u(\tau,\cdot)\|_{L^{\infty}}\|u(\tau,\cdot)\|_{H^{s,0}} 
  d\tau
 \nonumber\\
 \le& 
 C\eps + C \|u\|_{X_T}^2 \int_{0}^{t}  \frac{d\tau}{(1+\tau)^{1-\gamma/3}}
 \nonumber\\
 \le&
 C \eps + C \|u\|_{X_T}^2 (1+t)^{\gamma/3}
 \label{est_energy1}
\end{align}
and
\begin{align}
 \|\mathcal{U}(-t)u(t,\cdot)\|_{H^{0,s}} 
 \le& 
  \|\varphi\|_{H^{0,s}} 
  +  
  \int_{0}^{t} \|\mathcal{U}(-\tau)F(u(\tau,\cdot))\|_{H^{0,s}} d\tau
 \nonumber\\
 \le& 
  C\eps 
  + 
  C \int_{0}^{t} 
     \|u(\tau,\cdot)\|_{L^{\infty}}
     \|\mathcal{U}(-\tau)u(\tau,\cdot)\|_{H^{0,s}} 
 d\tau
 \nonumber\\
 \le& 
  C\eps + C \|u\|_{X_T}^2 \int_{0}^{t}  \frac{d\tau}{(1+\tau)^{1-\gamma/3}}
 \nonumber\\
 \le&
  C \eps + C \|u\|_{X_T}^2 (1+t)^{\gamma/3},
 \label{est_energy2}
\end{align}
respectively. Next we consider the $L^{\infty}$-bound for $u(t)$. 
In the case of $t\le 1$, the standard Sobolev embedding and 
\eqref{est_energy1} lead to
\begin{align}
 (1+t) \|u(t,\cdot)\|_{L^{\infty}} 
 \le 
 2\|u(t,\cdot)\|_{L^{\infty}}
 \le 
 C \|u(t,\cdot)\|_{H^{s,0}}
 \le  
 C(\eps + \|u\|_{X_T}^2).
 \label{est_pointwise1}
\end{align}
From now on, we focus on $t\ge 1$. 
We set 
$$
 \alpha(t, \xi)=\mathcal{G}\bigl( \mathcal{U}(-t)u(t,\cdot)\bigr)(\xi)
$$ 
and 
$$
 r(t, \xi)
 =
 \mathcal{G} \bigl(\mathcal{U}(-t) F(u(t,\cdot))\bigr)(\xi)
 -
 \frac{1}{t} F(\alpha(t, \xi))
$$
so that 
\begin{align*}
 i\pa_t \alpha(t,\xi) 
 =& 
 \mathcal{G} \bigl(\mathcal{U}(-t) (i\pa_t +\Lambda)u(t, \cdot)\bigr)(\xi)\\
 =&  
 \mathcal{G}\bigl(\mathcal{U}(-t) F(u(t,\cdot))\bigr)(\xi)\\
 =&
 \frac{1}{t}F(\alpha(t,\xi)) 
 +
 r(t, \xi).
\end{align*}
By the Sobolev inequality and \eqref{est_energy2}, we get
\begin{align*}
 \left\|\alpha(1,\cdot) \right\|_{L^\infty}
 \le  
  C \left\| {\mathcal U}(-1)u(1,\cdot)\right\|_{H^{0,s}}
 \le  
  C\eps+C\|u\|_{X_T}^2.
\end{align*}
By Lemma~\ref{lem_aux4}, we have 
\begin{align}
 \|r(t, \cdot)\|_{L^{\infty}}
 &\le
 C t^{-1-\gamma} \|\mathcal{U}(-t) u(t)\|_{H^{0,s}}^2 \nonumber\\
 &\le
 C t^{-1-\gamma} \Bigl\{(1+t)^{\frac{\gamma}{3}} \|u\|_{X_T} \Bigr\}^2 \nonumber\\
 &\le
 C t^{-1-\frac{\gamma}{3}} \|u\|_{X_T}^2.
 \label{Est_r}
\end{align}
Then it follows from the straightforward calculation that 
\begin{align}
 \pa_t \Bigl(\nu_A(\alpha(t,\xi))^2 \Bigr) 
 &=
 2\imagpart \JB{i \pa_t \alpha(t,\xi), A\alpha(t,\xi)}_{\C^3}
 \nonumber\\
 &=
 \frac{2}{t}\imagpart \jb{F(\alpha(t,\xi)), A\alpha(t,\xi)}_{\C^3} 
  + 
 2\imagpart \jb{r(t,\xi),  A\alpha(t,\xi)}_{\C^3}
 \nonumber\\
 &\le  
 0+ 2 \nu_A(r(t,\xi)) \nu_A(\alpha (t,\xi)), 
 \label{EqNuA}
\end{align}
which leads  to 
\begin{equation*}
 \pa_t \sqrt{\delta+\nu_A(\alpha(t,\xi))^2} 
 \le \nu_A(r(t,\xi)) 
 \le C\|r(t,\cdot)\|_{L^{\infty}} 
 \le C \|u\|_{X_T}^2 t^{-1-\frac{\gamma}{3}} 
\end{equation*} 
for any $\delta>0$.
Integrating with respect to $t$, and letting $\delta\to +0$, we obtain 
\begin{align*}
\nu_A(\alpha(t,\xi)) 
 \le  
 C\|\alpha(1,\cdot)\|_{L^{\infty}} 
 + 
 C \|u\|_{X_T}^2 \int_{1}^{t} \tau^{-1-\frac{\gamma}{3}} d \tau, 
\end{align*}
whence
\begin{equation}
 \|\alpha(t,\cdot)\|_{L^{\infty}} 
 \le 
 C\eps + C \|u\|_{X_T}^2.
\label{Est_Alpha}
\end{equation}
From Lemma~\ref{lem_aux2} and \eqref{est_energy2}, we deduce that
\begin{align}
 \|u(t,\cdot)\|_{L^{\infty}} 
 &\le 
 t^{-1}\|\alpha(t,\cdot)\|_{L^{\infty}}  
 + C t^{-1-2\gamma/3} (t^{-\gamma/3}\|\mathcal{U}(-t)u(t,\cdot)\|_{H^{0,s}})
 \nonumber\\
 &\le  
 Ct^{-1}(\eps +  \|u\|_{X_T}^2)
 \label{est_pointwise2}
\end{align}
for $t\ge 1$. 

By \eqref{est_energy1}, \eqref{est_energy2}, \eqref{est_pointwise1} and 
\eqref{est_pointwise2}, we obtain the desired estimate.
\end{proof}

\section{Proof of Theorem~\ref{thm_main}}
Now we are in a position to finish the proof of Theorem~\ref{thm_main}. 
It is enough to show 
\begin{align}
 \sup_{\xi \in \R^2}\nu_A(\alpha(t,\xi))
 \le 
  \frac{C}{\log t} 
 \label{main_est}
\end{align}
for $t\ge 2$, because we already know that 
\begin{align*}
 (1+t)\|u(t,\cdot)\|_{L^{\infty}} 
 \le 
 C\sup_{\xi \in \R^2}\nu_A(\alpha(t,\xi))  
 + C \eps t^{-2\gamma/3}
\end{align*}
by virtue of Lemma~\ref{lem_aux2}, \eqref{apriori_final} and 
\eqref{est_energy2}. 

To prove \eqref{main_est}, we put $\Phi(t):=\nu_A(\alpha(t,\xi))^2$ 
(with $\xi \in \R^2$ being regarded as a parameter) 
and compute 
\begin{align*}
 \frac{d}{dt} \Bigl( (\log t)^{{3}} {\Phi(t)} \Bigr)
 =
 (\log t)^{3} \frac{d \Phi}{dt}(t)
 +  
 \frac{3(\log t)^{2}}{t} \Phi(t).
\end{align*}
Similarly to \eqref{EqNuA}, 
it follows from \eqref{strict_dissip}, \eqref{Est_r}, \eqref{Est_Alpha} 
and \eqref{apriori_final} that
\begin{align*}
 \frac{d \Phi}{dt}(t)
 &=
 \frac{2}{t}\imagpart \jb{F(\alpha(t,\xi)), A\alpha(t,\xi)}_{\C^3} 
  + 
 2\imagpart \jb{r(t,\xi),  A\alpha(t,\xi)}_{\C^3}
 \nonumber\\
 &\le 
 -\frac{2}{t} C_*\, \Phi(t)^{3/2} + \frac{C\eps^3}{t^{1+\gamma/3}},
\end{align*}
where $C_*$ is the constant appearing in \eqref{strict_dissip}.
We also have
\begin{align*}
 \Phi(t)
 =
 \left(\frac{1}{C_*^2 (\log t)^{2}} \right)^{1/3} 
 \Bigl\{ C_*(\log t) \Phi(t)^{3/2} \Bigr\}^{2/3}
 \le 
 \frac{1}{3C_*^2 (\log t)^{2}} + \frac{2 C_*}{3}\, (\log t) \Phi(t)^{3/2}
\end{align*}
by the Young inequality. 
%
Piecing them together, we obtain
\begin{align*}
 \frac{d}{dt} \Bigl( (\log t)^{3} \Phi(t) \Bigr)
 \le 
 \frac{C}{t} 
 + \frac{C \eps^{3} (\log t)^3 }{t^{1+\gamma/3}}.
\end{align*}
Integrating with respect to $t$, we arrive at 
$$
 (\log t)^{3} \Phi(t)
 \le 
  C \eps^{2}
  + 
  \int_{2}^{t} \biggl(
   \frac{C}{\tau } + \frac{C\eps^{3} (\log \tau)^3}{\tau^{1+\gamma/3}} 
  \biggr) d\tau
 \le 
 C \log t,
$$
whence $\Phi(t)\le C/(\log t)^2$ for $t\ge 2$, as required.
\qed\\


Finally, we discuss the optimality of the decay rate $O((t\log t)^{-1})$. 
For simplicity, let $\varphi(x)=\delta \psi(x)$ with 
$\psi(\not\equiv 0)\in H^{s,0}\cap H^{0,s}$ and $\delta>0$ 
(note that $\eps=\|\varphi\|_{H^{s,0}}+\|\varphi\|_{H^{0,s}}\le C\delta$).
Then 
we can also show that the solution does not decay strictly faster than 
$t^{-1}(\log t)^{-1}$ as $t \to \infty$ 
if $\delta$ is small enough. 
Indeed, suppose that
\begin{align*}
 \lim_{t\to \infty} t(\log t)\|u(t,\cdot)\|_{L^\infty}=0
\end{align*}
holds true. 
Then, it follows from Lemma~\ref{lem_aux2}, \eqref{apriori_final} and 
\eqref{est_energy2} that
\begin{align}
(\log t)\Phi(t)^{1/2}
&\le 
 C t(\log t) t^{-1}\|\alpha(t,\cdot)\|_{L^\infty}
\nonumber\\
&\le 
 C t(\log t)\left(\|u(t,\cdot)\|_{L^\infty}+C\delta t^{-1-2\gamma/3}\right)
\nonumber\\
&\to 0
 \label{est_contra}
\end{align}
as $t\to \infty$. Hence, if $\delta$ is sufficiently small, we have
$$
C^*(\log t) \Phi(t)^{1/2}\le 1
$$
for $t\ge {2}$, where $C^*$ is the constant appearing in 
\eqref{strict_dissip}.
Similarly to the proof of Theorem~\ref{thm_main},
we have 
\begin{align*}
\frac{d}{dt}\left((\log t)^2\Phi(t)\right)\ge & 
\frac{2(\log t)}{t}\Phi(t)\bigl(1-C^*(\log t)\Phi(t)^{1/2}\bigr)-\frac{C(\log t)^2\delta^3}{t^{1+\gamma/3}}\ge -\frac{C(\log t)^2\delta^3}{t^{1+\gamma/3}},
\end{align*}
which yields
$$
(\log t)^2\Phi(t)
\ge 
 (\log {2})^2\Phi({2})
 -
 \int_{{2}}^t\frac{C(\log \tau)^2\eps^3}{\tau^{1+\gamma/3}}d\tau
 \ge 
 C\delta^2-C'\delta^3>0
$$
for small $\delta$ with some positive constants $C$ and $C'$. This contradicts 
\eqref{est_contra}.


\appendix \section{Appendix}
For the convenience of the readers, we give an outline of the proof of 
the four lemmas stated in Section~\ref{sec_aux}.

\begin{proof}[Proof of Lemma~\ref{lem_aux1}.]
For $s \in [0,2)$, we have
\begin{align}
 \bigl\|\, (-\Delta)^{s/2}(|f|f) \,\bigr\|_{L^2} 
 \le C\|f\|_{L^{\infty}} \|(-\Delta)^{s/2}f\|_{L^2}
 \label{est_aux1}
\end{align}
and
\begin{align}
 \|(-\Delta)^{s/2}(f g)\|_{L^2} 
 \le 
 C(\|f\|_{L^{\infty}} \|(-\Delta)^{s/2}g\|_{L^2} 
 +  
 \|(-\Delta)^{s/2}f\|_{L^2} \|g\|_{L^{\infty}})
 \label{est_aux2}
\end{align}
(see, e.g., \cite{GOV} and \cite{KP} for the proof). 
The desired estimate follows from them immediately.
\end{proof}

\begin{proof}[Proof of Lemma~\ref{lem_aux2}.]
By a simple calculation, we can see that $\mathcal{U}(t)$ is decomposed into 
the following forms:
\begin{align}
\mathcal{U}(t) 
 =\mathcal{M}(t) \mathcal{D}(t) \mathcal{G} \mathcal{M}(t)
 =\mathcal{M}(t) \mathcal{D}(t) \mathcal{W}(t) \mathcal{G},
 \quad t\ne 0,
 \label{mdfm}
\end{align}
where 
$\mathcal{M}(t)=\mathcal{E}_{\theta}$ with $\theta=|x|^2/(2t)$, 
$\bigl(\mathcal{D}(t) \phi\bigr) (x)=\frac{1}{t}\phi(\frac{x}{t})$, and 
$\mathcal{W}(t)=\mathcal{G} \mathcal{M}(t) \mathcal{G}^{-1}$. 
Note that 
$|e^{i\theta}-1|\le C |\theta|^{\gamma}$ for $\gamma \in [0,1]$ and 
$(1+|x|)^{2\gamma-s} \in L^{2}(\R^2)$ if $s> 1+2\gamma$.  
They imply 
\begin{align}
 \left\|{\mathcal G}({\mathcal M}(t)-1)\psi\right\|_{L^{\infty}}
 &\le 
 C \left\|(\mathcal{M}(t)-1)\psi\right\|_{L^1}
 \nonumber\\
 &\le 
 C t^{-\gamma} \left\| |x|^{2\gamma}\psi \right\|_{L^1}
 \nonumber\\
 &\le 
 C t^{-\gamma} 
 \left\|(1+|x|)^{2\gamma-s}\right\|_{L^2} 
 \left\|(1+|x|)^s \psi \right\|_{L^2}
 \nonumber\\ 
 &\le 
 C t^{-\gamma} \left\|\psi\right\|_{H^{0,s}}. 
 \label{est_w}
\end{align}
Since we have 
\begin{align*}
 \|\phi\|_{L^{\infty}}
 &= 
 \|
  \mathcal{M}(t)\mathcal{D}(t){\mathcal G}\mathcal{M}(t)\mathcal{U}(-t) \phi
 \|_{L^{\infty}}\\
 &=
 t^{-1}\|\mathcal{G}\mathcal{M}(t) \mathcal{U}(-t) \phi\|_{L^{\infty}},
\end{align*}
it follows from \eqref{est_w} that
\begin{align*}
\left|\|\phi\|_{L^\infty}-t^{-1}\|{\mathcal G}{\mathcal U}(-t)\phi\|_{L^\infty}\right| &\le
 t^{-1}
 \|\mathcal{G}(\mathcal{M}(t)-1)\mathcal{U}(-t) \phi\|_{L^{\infty}} 
 \\
 &\le
 Ct^{-1-\gamma}\|\mathcal{U}(-t) \phi\|_{H^{0,s}}.
\end{align*}
\end{proof}

\begin{proof}[Proof of Lemma~\ref{lem_aux3}.]
By \eqref{assump_m}, or equivalently \eqref{gauge_inv}, we have
\begin{align}
 \mathcal{M}(-t)F(\phi)=F(\mathcal{M}(-t)\phi).
 \label{gauge_inv2}
\end{align}
We also note that
$$
  \mathcal{U}(t) |x|^s \mathcal{U}(-t) 
  =\mathcal{M}(t) \begin{pmatrix} 
    \frac{t^s (-\Delta)^{s/2}}{m_1^s} & 0 &0\\
    0 & \frac{t^s (-\Delta)^{s/2}}{m_2^s} & 0 \\
   0 & 0 & \frac{t^s (-\Delta)^{s/2}}{m_3^s} 
   \end{pmatrix}\mathcal{M}(-t). 
$$
By using \eqref{est_aux1}, \eqref{est_aux2} and the above identities, 
we deduce that 
\begin{align*}
 \left\| |x|^s\mathcal{U}(-t) F(\phi) \right\|_{L^{2}}
 &\le
 Ct^{s} \left\| (-\Delta)^{s/2}F(\mathcal{M}(-t)\phi) \right\|_{L^2}\\
 &\le
 C t^s \left\| \mathcal{M}(-t)\phi \right\|_{L^{\infty}} 
       \left\| (-\Delta)^{s/2} (\mathcal{M}(-t)\phi) \right\|_{L^2}\\
 &\le
  C \|\phi\|_{L^{\infty}} \left\| |x|^s \mathcal{U}(-t) \phi \right\|_{L^2}.
\end{align*}
\end{proof}

\begin{proof}[Proof of Lemma~\ref{lem_aux4}.]
We put $\beta=\mathcal{G}\mathcal{U}(-t)\phi$.
By \eqref{mdfm} and \eqref{gauge_inv2}, we have
\begin{align*}
 \mathcal{G}\mathcal{U}(-t)F(\phi)
 &=
 \mathcal{W}(t)^{-1}\mathcal{D}(t)^{-1}\mathcal{M}(t)^{-1}
 F(\mathcal{M}(t)\mathcal{D}(t)\mathcal{W}(t)\beta)\\
 &=
 \mathcal{W}(t)^{-1}\mathcal{D}(t)^{-1}
 F(\mathcal{D}(t)\mathcal{W}(t)\beta)\\
 &=
 \frac{1}{t} \mathcal{W}(t)^{-1}F(\mathcal{W}(t)\beta)\\
 &=
 \frac{1}{t}F(\mathcal{G}\mathcal{U}(-t)\phi)+R(t),
\end{align*}
where
\begin{align*}
 &R(t)=\frac{1}{t}(R_1(t)+R_2(t)),\\
 &R_1(t)=(1-\mathcal{W}(t))\mathcal{W}(t)^{-1}F(\mathcal{W}(t)\beta),\\
 &R_2(t)=F(\mathcal{W}(t)\beta) - F(\beta).
\end{align*}
\eqref{est_w} yields
\begin{align*}
\left\|(\mathcal{W}(t)-1)\psi\right\|_{L^\infty}\le & Ct^{-\gamma}\left\|\mathcal{G}^{-1}\psi\right\|_{H^{0,s}}\le Ct^{-\gamma}\|\psi\|_{H^{s,0}}.
\end{align*}
Hence, by Lemma~\ref{lem_aux1} and the Sobolev embedding, we have
\begin{align*}
 \|R_1(t)\|_{L^{\infty}}
 & \le
 Ct^{-\gamma} \left\|\mathcal{W}(t)^{-1}F(\mathcal{W}(t)\beta)\right\|_{H^{s,0}}\\
 &\le Ct^{-\gamma} \|F(\mathcal{W}(t)\beta)\|_{H^{s,0}}\\
 &\le 
 Ct^{-\gamma} \|\mathcal{W}(t)\beta\|_{H^{s,0}}^2\\
 &\le 
 Ct^{-\gamma} \|\beta\|_{H^{s,0}}^2
\end{align*}
and
\begin{align*}
 \|R_2(t)\|_{L^{\infty}}
 &\le 
 C(\|\mathcal{W}(t)\beta\|_{L^{\infty}} + \|\beta\|_{L^{\infty}})
 \|(\mathcal{W}(t)-1)\beta\|_{L^{\infty}}\\
 &\le
 C (\|\mathcal{W}(t)\beta\|_{H^{s,0}} + \|\beta\|_{H^{s,0}})
 \cdot Ct^{-\gamma}\|\beta\|_{H^{s,0}}\\
 &\le
 Ct^{-\gamma} \|\beta\|_{H^{s,0}}^2.
\end{align*}
Summing up, we arrive at
$$
 \|R(t)\|_{L^{\infty}}
 \le 
 Ct^{-1-\gamma} \|\beta\|_{H^{s,0}}^2
 =
 Ct^{-1-\gamma} \|\mathcal{U}(-t) \phi\|_{H^{0,s}}^2,
$$
as required.
\end{proof}
\medskip
\subsection*{Acknowledgments}

Two of the authors (S.~K. and H.~S.) would like to express their 
gratitude for warm hospitality of Department of Mathematics, 
Yanbian University. 
This work started during their visit there. 
The authors also thank Professor Akitaka Matsumura and 
Professor Hisashi Nishiyama for valuable comments 
on the earlier version of this work.

The work of S.K. is partially supported by JSPS, 
Grant-in-Aid for Scientific Research (C) 23540241. 
The work of H.S. is partially supported by JSPS, 
Grant-in-Aid for Young Scientists (B) 22740089 
and 
Grant-in-Aid for Scientific Research (C) 25400161. 


\end{document}